\newtheorem{theorem}{Theorem}[section]
\theoremstyle{definition}
\newtheorem{proposition}[theorem]{Proposition}
\newtheorem{corollary}[theorem]{Corollary}
\newtheorem{remark}[theorem]{Remark}
\theoremstyle{remark}
\newcommand{\be}{\begin{equation}}
\newcommand{\ee}{\end{equation}}
\numberwithin{equation}{section}
\begin{document}

\title{On an algebraic formula and applications to group action on manifolds}

\author{Ping Li}
\address{Department of Mathematics, Tongji University, Shanghai 200092, China}
\email{pingli@tongji.edu.cn}
\thanks{The first author was partially supported by National Natural Science Foundation of China (Grant No. 11101308)}

\author{Kefeng Liu}
\address{Department of Mathematics, University of California at Los Angeles, Los Angeles, CA 90095, USA
and Center of Mathematical Science, Zhejiang University, Hangzhou
310027, China}
\email{liu@math.ucla.edu}

\subjclass[2000]{19J35, 37B05.}



\keywords{group actions, fixed points, localization formulae}

\begin{abstract}
In this paper we consider a purely algebraic result. Then given a
circle or cyclic group of prime order action on a manifold, we will
use it to estimate the lower bound of the number of fixed points. We
also give an obstruction to the existence of $\mathbb{Z}_p$ action
on manifolds with isolated fixed points when $p$ is a prime.
\end{abstract}

\maketitle

\section{Introduction}
In this short note we first prove a purely algebraic result, which
stems from the localization formulae of group actions on manifolds,
and the idea of which has been used in several literatures
(\cite{PT}, \cite{LL}, \cite{Li}, \cite{LT}, \cite{CKP}). Then,
using this result, we shall give some applications to circle action
and finite cyclic group action on unitary manifolds and smooth
manifolds. The rest of this section is to introduce and prove this
algebraic result and the next two sections is devoted to
applications.

Let $F$ be a field (finite or infinite). What we are mainly
concerned with is the real number filed $\mathbb{R}$ and finite
field $\mathbb{Z}/p\mathbb{Z}=:\mathbb{Z}_p$. Here $\mathbb{Z}$ is
the integer ring and $p$ is a prime.

Suppose we have a set of $r$ elements $\{P_1,\cdots,P_r\}$. We call
such a set a \emph{weighted set over $F$} if, for each $P_i$, $1\leq
i\leq r$, we associate $n+1$ numbers
$\mu_i,a_1^{(i)},\cdots,a_n^{(i)}$ in $F$ to it. $\mu_i$ is called
the \emph{coefficient} of $P_i$ and $a_1^{(i)},\cdots,a_n^{(i)}$ are
called \emph{characteristic numbers} of $P_i$. We will see in the
next section that such a system appears naturally as the fixed
points of group action on manifolds and the coefficient and
characteristic numbers of these elements will be induced from the
representation on the tangent spaces of the corresponding fixed
points.

Now we will introduce the concept of \emph{stable} weighted set.

Let $\lambda=(1^{m_1(\lambda)}2^{m_2(\lambda)}\cdots
n^{m_n(\lambda)})$ be a partition of weight $w$, i.e.,
$m_j(\lambda)$ are all non-negative integers and
$\sum_{j=1}^{n}j\cdot m_j(\lambda)=w$. For convenience we set
$$a_{\lambda}^{(i)}:=\prod_{j=1}^{n}(a_j^{(i)})^{m_j(\lambda)}.$$
We call a weighted set $\{P_1,\cdots,P_r\}$ over $F$ \emph{stable}
if for any $0\leq w<n$ and any partition
$\lambda=(1^{m_1(\lambda)}2^{m_2(\lambda)}\cdots n^{m_n(\lambda)})$
of weight $w$, we have
\be\label{stable}\Gamma(\lambda):=\sum_{i=1}^{r}\mu_i\cdot
a_{\lambda}^{(i)}=0.\ee The reason of this definition lies in the
localization formulae of group actions and its underlying geometric
meaning will be clear in the next section.

For a partition $\lambda=(1^{m_1(\lambda)}2^{m_2(\lambda)}\cdots
n^{m_n(\lambda)})$, we define
$$m(\lambda):=\textrm{max}\{m_1(\lambda),m_2(\lambda),\ldots,m_n(\lambda)\}.$$
With these notations understood, we can state the main result in
this section.

\begin{proposition}\label{algebraic result}
Let $\{P_1,\cdots,P_r\}$ be a stable weighted set over a field $F$.
If there exists a partition
$\lambda=(1^{m_1(\lambda)}2^{m_2(\lambda)}\cdots n^{m_n(\lambda)})$
of weight $n$ such that $m(\lambda)\geq r$, then
$$\Gamma(\lambda)=\sum_{i=1}^{r}\mu_i\cdot a_{\lambda}^{(i)}=0.$$
\end{proposition}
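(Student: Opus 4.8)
The plan is to turn stability into a Vandermonde-type linear system in the quantities $\mu_i$ (suitably rescaled) and then invoke invertibility of the Vandermonde matrix. Assume $r\geq 1$ (otherwise the statement is vacuous). Fix the given partition $\lambda$ of weight $n$ with $m(\lambda)\geq r$, and choose an index $k$ with $m_k:=m_k(\lambda)\geq r$. Writing $b_i:=a_k^{(i)}$ and $c_i:=\prod_{j\neq k}(a_j^{(i)})^{m_j(\lambda)}$, we have $a_\lambda^{(i)}=b_i^{\,m_k}c_i$, so the target identity reads $\Gamma(\lambda)=\sum_{i=1}^r \mu_i c_i\, b_i^{\,m_k}=0$.

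Next I would introduce the "truncated" partitions: for $0\leq s\leq m_k$ let $\lambda^{(s)}$ be obtained from $\lambda$ by keeping only $s$ of the $m_k$ parts equal to $k$, i.e. $m_k(\lambda^{(s)})=s$ and $m_j(\lambda^{(s)})=m_j(\lambda)$ for $j\neq k$. Its weight is $n-k(m_k-s)$, which lies in $[0,n)$ exactly when $0\leq s<m_k$. Hence stability \eqref{stable} applies to each such $\lambda^{(s)}$ and yields, for every $s=0,1,\ldots,m_k-1$,
$$\Gamma(\lambda^{(s)})=\sum_{i=1}^r (\mu_i c_i)\, b_i^{\,s}=0 .$$
Since $m_k\geq r$, in particular these relations hold for $s=0,1,\ldots,r-1$.

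The final step is to extract $\Gamma(\lambda)$ from this system. If the $b_i$ were pairwise distinct, the $r\times r$ matrix $(b_i^{\,s})_{0\leq s\leq r-1,\,1\leq i\leq r}$ would be a nonsingular Vandermonde matrix, forcing $\mu_i c_i=0$ for all $i$ and hence $\Gamma(\lambda)=0$. In general I would group indices by the distinct values $\beta_1,\ldots,\beta_t$ ($t\leq r$) taken by the $b_i$, set $e_u:=\sum_{i:\,b_i=\beta_u}\mu_i c_i$, and rewrite the relations as $\sum_{u=1}^t e_u\beta_u^{\,s}=0$ for $s=0,\ldots,r-1$. Restricting to $s=0,\ldots,t-1$ gives a square homogeneous system with coefficient matrix the Vandermonde matrix of the \emph{distinct} scalars $\beta_1,\ldots,\beta_t$, of determinant $\prod_{u<v}(\beta_v-\beta_u)\neq 0$; thus $e_u=0$ for all $u$, and therefore
$$\Gamma(\lambda)=\sum_{i=1}^r \mu_i c_i\, b_i^{\,m_k}=\sum_{u=1}^t \beta_u^{\,m_k}\,e_u=0 .$$

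The only subtle point is the possible coincidence of the characteristic numbers $a_k^{(i)}$ for different $i$, which prevents a direct $r\times r$ Vandermonde inversion; collapsing equal $b_i$'s together is what resolves it. Note that the argument uses nothing about $F$ beyond the standard Vandermonde determinant identity, so it is valid uniformly over $\mathbb{R}$ and over $\mathbb{Z}_p$.
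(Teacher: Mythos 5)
Your proof is correct and follows essentially the same route as the paper's own argument: truncate the repeated part $k$ to produce a family of lower-weight partitions to which stability applies, group the indices by the distinct values of $a_k^{(i)}$, and invert the resulting Vandermonde system in those distinct values before resumming to get $\Gamma(\lambda)=0$. The only cosmetic difference is that the paper normalizes to $k=1$ by relabeling (``without loss of generality $m_1(\lambda)=m(\lambda)$''), while you carry a general index $k$; the grouping step you flag as the ``subtle point'' is exactly the paper's introduction of the sums $A_t$ over indices sharing a common value $s_t$.
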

\begin{proof}
Without loss of generality, we may assume $m_1(\lambda)=m(\lambda)$.
Let \be\label{set}\{a_1^{(i)}~|~1\leq i\leq
r\}=\{s_1,\ldots,s_l\}\in F\ee By definition $s_1,\ldots,s_l$ are
mutually distinct, $l\leq r$, and $l=r$ if and only if
$a_1^{(i)},\ldots,a_n^{(i)}$ are mutually distinct. Define
$$A_{t}:=\sum_{\substack{1\leq i\leq r \\ a_1^{(i)}=s_{t}}}\mu_i\cdot(a_2^{(i)})^{m_2(\lambda)}
(a_3^{(i)})^{m_3(\lambda)}\cdots(a_n^{(i)})^{m_n(\lambda)},\qquad
1\leq t\leq l.$$ Now let us consider the following $m_1(\lambda)$
partitions:
$$\lambda^{(j)}:=(1^j2^{m_2(\lambda)}3^{m_3(\lambda)}\cdots n^{m_n(\lambda)}),\qquad 0\leq j\leq m_1(\lambda)-1.$$
The weights of these $\lambda^{(j)}$ are all less than $n$ as the
weight of $\lambda$ is exactly $n$. Then following (\ref{stable}) we
have
\begin{eqnarray}\label{matrix}
\left\{\begin{array}{l}
A_{1}+A_{2}+\cdots+A_{l}=0\\
s_{1}A_{1}+s_{2}A_{2}+\cdots+s_{l}A_{l}=0\\
\vdots\\
(s_{1})^{m_1(\lambda)-1}A_{1}+(s_{2})^{m_1(\lambda)-1}A_{2}+\cdots+(s_{l})^{m_1(\lambda)-1}A_{l}=0
\end{array}
\right. \end{eqnarray}

Note that $l\leq r$ and by assumption $m_1(\lambda)=m(\lambda)\geq
r$, which follows $l\leq m_1(\lambda)$. By definition
$s_1,\cdots,s_l$ are mutually distinct, which means the determinant
of the coefficient matrix of the first $l$ lines of (\ref{matrix})
is \be\left|\begin{array}{cccc}
1& 1 & \cdots & 1\\
s_1 & s_2 & \cdots & s_l\\
\vdots & \vdots & \ddots &\vdots\\
(s_1)^{l-1} & (s_2)^{l-1} & \cdots &(s_l)^{l-1}
\end{array}\right|=\prod_{1\leq i<j\leq l}(s_j-s_i)\neq 0,\nonumber\ee
  which is a nonsingular Vandermonde matrix.

  Thus
$$A_1=A_2=\cdots=A_l=0.$$
Therefore, \be \begin{split}
\Gamma(\lambda)&=\sum_{i=1}^{r}\mu_i\cdot
a_{\lambda}^{(i)}\\
&=\sum_{i=1}^{r}
(a_{1}^{(i)})^{m_1(\lambda)}\cdot\mu_i\cdot(a_2^{(i)})^{m_2(\lambda)}
\cdots(a_n^{(i)})^{m_n(\lambda)}\\
&=\sum_{t=1}^{l}(s_t)^{m_1(\lambda)}\cdot A_t=0
\end{split}\nonumber\ee
\end{proof}
\begin{remark}
The idea of this proof was first used by Pelayo-Tolman (\cite{PT},
Lemma 8), then by the present authors (\cite{LL}, Lemmas 3.1 and
3.2) and the first author (\cite{Li}, Lemmas 3.2 and 3.2).
L\"{u}-Tan used this idea in their proof of Theorem 1.1 in
\cite{LT}, which was extracted by Cho-Kim-Park in (\cite{CKP},
Theorem 2.2).
\end{remark}
\begin{corollary}
Let $\{P_1,\cdots,P_r\}$ be a stable weighted set over a field $F$.
If we define
$$m:=\textrm{max}\{m(\lambda)~|~\lambda:~\textrm{partitions of weight}~n~\textrm{such that}~\Gamma(\lambda)\neq 0\},$$
then $$r\geq m+1.$$
\end{corollary}
Note that if in Proposition \ref{algebraic result} $F$ is a finite
field, then by (\ref{set}) we know $l\leq |F|$, where $|F|$ is the
cardinality of $F$. Therefore we have the following implication from
the process of proving Proposition \ref{algebraic result}.

\begin{corollary}
Let $\{P_1,\cdots,P_r\}$ be a stable weighted set over a finite
field $F$. If there exists a partition
$\lambda=(1^{m_1(\lambda)}2^{m_2(\lambda)}\cdots n^{m_n(\lambda)})$
of weight $n$ such that $m(\lambda)\geq |F|$, then
$\Gamma(\lambda)=0$.
\end{corollary}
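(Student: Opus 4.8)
The plan is to run the argument of Proposition~\ref{algebraic result} almost verbatim, with the inequality $l\le r$ replaced by $l\le|F|$. First I would reduce, without loss of generality, to the case $m_1(\lambda)=m(\lambda)$ (relabel the indices $1,\dots,n$ if necessary), and list the distinct values taken by the first characteristic number: write $\{a_1^{(i)}\mid 1\le i\le r\}=\{s_1,\dots,s_l\}$ with the $s_t$ pairwise distinct. The single point at which finiteness of $F$ is used is the trivial observation that $s_1,\dots,s_l$ are distinct elements of $F$, hence $l\le|F|$; this takes over the role that $l\le r$ played in Proposition~\ref{algebraic result}.

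Next, exactly as there, I would set $A_t:=\sum_{a_1^{(i)}=s_t}\mu_i\,(a_2^{(i)})^{m_2(\lambda)}\cdots(a_n^{(i)})^{m_n(\lambda)}$ for $1\le t\le l$, and consider the truncated partitions $\lambda^{(j)}:=(1^j2^{m_2(\lambda)}\cdots n^{m_n(\lambda)})$ for $0\le j\le m_1(\lambda)-1$. Since the weight of $\lambda$ is exactly $n$, each $\lambda^{(j)}$ has weight strictly less than $n$, so stability (equation~(\ref{stable})) gives $\sum_{t=1}^{l}(s_t)^{j}A_t=0$ for every such $j$, i.e. the linear system~(\ref{matrix}) in the unknowns $A_1,\dots,A_l$. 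By hypothesis $m(\lambda)\ge|F|\ge l$, so $m_1(\lambda)-1\ge l-1$, and the first $l$ of these equations have coefficient matrix the $l\times l$ Vandermonde matrix in the distinct nodes $s_1,\dots,s_l$, whose determinant $\prod_{1\le i<j\le l}(s_j-s_i)$ is nonzero. Hence $A_1=\cdots=A_l=0$, and therefore $\Gamma(\lambda)=\sum_{i=1}^{r}\mu_i\,a_\lambda^{(i)}=\sum_{t=1}^{l}(s_t)^{m_1(\lambda)}A_t=0$.

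There is essentially no obstacle here beyond the bookkeeping already carried out for Proposition~\ref{algebraic result}: the proof is the same, and the only thing worth checking carefully is that the count of available truncated partitions, namely the $m_1(\lambda)$ exponents $0,1,\dots,m_1(\lambda)-1$, is at least $l$ — which is precisely what the hypothesis $m(\lambda)\ge|F|\ge l$ guarantees. In other words, this Corollary is just the special form the proof of Proposition~\ref{algebraic result} takes when one notes that over a finite field the number of distinct first coordinates is automatically bounded by $|F|$, independently of $r$.
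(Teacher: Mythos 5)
Your proof is correct and is exactly the argument the paper intends: the paper derives this corollary by observing that the distinct values $s_1,\dots,s_l$ lie in $F$, so $l\le|F|$, and then rerunning the proof of Proposition~\ref{algebraic result} with the bound $m(\lambda)\ge|F|\ge l$ in place of $m(\lambda)\ge r\ge l$. No differences worth noting.
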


\begin{corollary}\label{coro}
Let $\{P_1,\cdots,P_r\}$ be a stable weighted set over a finite
field $F$. If we define
$$m:=\textrm{max}\{m(\lambda)~|~\lambda:~\textrm{partitions of weight}~n~\textrm{such that}~\Gamma(\lambda)\neq 0\},$$
then $$|F|\geq m+1.$$
\end{corollary}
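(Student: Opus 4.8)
The plan is to derive Corollary \ref{coro} directly from the previous corollary (the one asserting that $\Gamma(\lambda)=0$ whenever $\lambda$ has weight $n$ and $m(\lambda)\geq|F|$), by a simple contrapositive argument. Specifically, I would argue as follows: let $\lambda$ be any partition of weight $n$ with $\Gamma(\lambda)\neq 0$. Then the previous corollary forbids $m(\lambda)\geq|F|$, so we must have $m(\lambda)\leq|F|-1$, i.e. $m(\lambda)+1\leq|F|$. Taking the maximum over all such $\lambda$ gives $m+1\leq|F|$, which is exactly the claim.

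The only subtlety to address is the degenerate case in which there is \emph{no} partition $\lambda$ of weight $n$ with $\Gamma(\lambda)\neq 0$ — then the set over which the maximum is taken is empty and $m$ is undefined (or, by the usual convention, $m=-\infty$), so the inequality $|F|\geq m+1$ holds trivially (or is vacuous). I would note this briefly so the statement is not ambiguous; in the intended applications such a $\lambda$ always exists, so this is a non-issue. One should also recall that $|F|\geq 2$ for any field, so the bound is never vacuous on the right-hand side.

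I do not anticipate any real obstacle here: this is the ``finite-field'' analogue of the preceding corollary over a general field (where $r$ played the role that $|F|$ plays now), and the proof is the same one-line contrapositive. If I wanted to be fully self-contained rather than invoking the previous corollary, I would instead unwind the proof of Proposition \ref{algebraic result} directly: in that argument the distinct values $s_1,\ldots,s_l$ lie in $F$, so $l\leq|F|$; if some partition $\lambda$ of weight $n$ had $m(\lambda)\geq|F|\geq l$, the Vandermonde system \eqref{matrix} would force all $A_t=0$ and hence $\Gamma(\lambda)=0$, a contradiction. Either route gives the bound $|F|\geq m+1$ with essentially no computation.
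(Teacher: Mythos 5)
Your argument is correct and matches the paper's intent: Corollary \ref{coro} is stated as an immediate contrapositive consequence of the preceding corollary (that $\Gamma(\lambda)=0$ whenever $\lambda$ has weight $n$ and $m(\lambda)\geq|F|$), which in turn comes from the bound $l\leq|F|$ in the proof of Proposition \ref{algebraic result}, exactly as you describe. Your remark about the degenerate empty-maximum case is a reasonable extra precaution but not needed for the intended applications.
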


\section{Applications to circle actions}
For our purpose, here we only review Bott residue formula for
$4n$-dimensional smooth manifolds. For general case we refer to
(\cite{LL}, Section 2).

Let $N^{4n}$ be a $4n$-dimensional connected, oriented smooth
manifold with a (smooth) circle action whose fixed points are
non-empty and isolated, say $\{P_1,P_2\cdots,P_r\}$. At each fixed
point $P_i$, the tangent space $T_{P_i}N$ splits as a real
$S^1$-module induced from the isotropy representation as follows
$$T_{p_{i}}N=\bigoplus_{j=1}^{2n}V^{(i)}_{j},$$
where each $V_{j}^{(i)}$ is a real $2$-dimensional plane. We choose
an isomorphism of complex plane $\mathbb{C}$ with $V_{j}^{(i)}$
relative to which the representation of $S^{1}$ on $V_{j}^{(i)}$ is
given by $e^{\sqrt{-1}\theta}\mapsto e^{\sqrt{-1}k_{j}^{(i)}\theta}$
with $k_{j}^{(i)}\in\mathbb{Z}-\{0\}$. We can assume the rotation
numbers $k^{(i)}_{1},\cdots,k^{(i)}_{2n}$ be chosen in such a way
that the usual orientations on the summands
$V^{(i)}_{j}\cong\mathbb{C}$ induce the given orientation on
$T_{p_{i}}N$. Note that these $k^{(i)}_{1},\cdots,k^{(i)}_{2n}$ are
uniquely defined up to even number of sign changes. In particular,
their product $\prod_{j=1}^{2n}k_{j}^{(i)}$ is well-defined.

Let $p_j\in H^{4j}(M;\mathbb{Z})$ $(1\leq j\leq n)$ be the $j$-th
Pontrjagin class of $N^{4n}$ and
$\lambda=(1^{m_1(\lambda)}2^{m_2(\lambda)}\cdots n^{m_n(\lambda)})$
be a partition. Then we can define the corresponding Pontrjagin
number $p_{\lambda}[N]$ as follows
$$p_{\lambda}[N]:=<p_1^{m_1(\lambda)}p_2^{m_2(\lambda)}\cdots p_n^{m_n(\lambda)},[N]>.$$
Here $[N]$ is the fundamental class of $N^{4n}$ determined by the
orientation and $<\cdot,\cdot>$ is the Kronecker pairing. By
definition $p_{\lambda}[N]=0$ unless the weight of $\lambda$ is $n$.

Let $e_j(x_1,\cdots,x_{2n})$ be the $j$-th elementary symmetric
polynomial in the variables $x_1,\cdots,x_{2n}$. At each fixed point
$P_i$, we can associate to $n+1$ numbers
$\mu_i,a_1^{(i)},\cdots,a_n^{(i)}$ as follows.
$$\mu_i=\frac{1}{\prod_{j=1}^{2n}k^{(i)}_j},\qquad a_j^{(i)}=e_j\big((k_1^{(i)})^2,\cdots,(k_{2n}^{(i)})^2\big),\qquad 1\leq j\leq n.$$
The following result is a special case of Bott residue formula
(\cite{AS}, p.598).
\begin{theorem}[Bott residue formula]
With all above notations understood and suppose
$\lambda=(1^{m_1(\lambda)}2^{m_2(\lambda)}\cdots n^{m_n(\lambda)})$
is any partition whose weight is no more than $n$, then we have
$$\sum_{i=1}^{r}\mu_i\cdot a_{\lambda}^{(i)}=p_{\lambda}[N].$$
\end{theorem}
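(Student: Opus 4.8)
The plan is to derive this statement from the Atiyah--Bott--Berline--Vergne localization theorem in $S^1$-equivariant cohomology, in the same spirit as the general discussion in \cite{LL}. First I would pass to the equivariant setting: write $H^*_{S^1}(\mathrm{pt};\mathbb{R})=\mathbb{R}[u]$ with $\deg u=2$, let $p_j^{S^1}\in H^{4j}_{S^1}(N;\mathbb{R})$ denote the equivariant Pontrjagin classes of $TN$ (so that $p_j^{S^1}$ restricts to the ordinary $p_j$ under the forgetful map, i.e. at $u=0$), and for the given partition $\lambda$ of weight $w\le n$ set $P:=(p_1^{S^1})^{m_1(\lambda)}\cdots(p_n^{S^1})^{m_n(\lambda)}\in H^{4w}_{S^1}(N;\mathbb{R})$. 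The equivariant pushforward $\int_N\colon H^{*}_{S^1}(N)\to H^{*-4n}_{S^1}(\mathrm{pt})$ sends $P$ into $H^{4w-4n}_{S^1}(\mathrm{pt})$; since $w\le n$, this is $0$ when $w<n$, and when $w=n$ it is the genuine Pontrjagin number $\langle p_1^{m_1(\lambda)}\cdots p_n^{m_n(\lambda)},[N]\rangle=p_\lambda[N]\in\mathbb{R}=H^0_{S^1}(\mathrm{pt})$ (a class of degree $0$ is its own restriction to $u=0$).

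Next I would compute the same pushforward by localization over the fixed points. As the fixed points are isolated, the normal bundle of $P_i$ is all of $T_{P_i}N=\bigoplus_{j=1}^{2n}V^{(i)}_j$, and with the complex structures on the $V^{(i)}_j$ chosen so that their orientations induce the given orientation on $T_{P_i}N$, each $V^{(i)}_j$ is the weight-$k^{(i)}_j$ representation, with equivariant Euler class $k^{(i)}_j u$ and equivariant first Pontrjagin class $(k^{(i)}_j)^2 u^2$. Hence
\[
e_{S^1}\!\left(T_{P_i}N\right)=\Big(\prod_{j=1}^{2n}k^{(i)}_j\Big)u^{2n},
\qquad
p_j^{S^1}\big|_{P_i}=e_j\!\big((k^{(i)}_1)^2,\dots,(k^{(i)}_{2n})^2\big)\,u^{2j}=a_j^{(i)}u^{2j},
\]
so $P|_{P_i}=a_\lambda^{(i)}u^{2w}$, and the localization formula gives, in the localized ring $\mathbb{R}[u,u^{-1}]$,
\[
\int_N P=\sum_{i=1}^r\frac{P|_{P_i}}{e_{S^1}(T_{P_i}N)}
=\sum_{i=1}^r\frac{a_\lambda^{(i)}u^{2w}}{\big(\prod_{j}k^{(i)}_j\big)u^{2n}}
=u^{2w-2n}\sum_{i=1}^r\mu_i\,a_\lambda^{(i)}.
\]

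Comparing the two evaluations of $\int_N P$ then finishes the argument. If $w=n$ the powers of $u$ cancel and we get $\sum_{i}\mu_i a_\lambda^{(i)}=p_\lambda[N]$. If $w<n$ the left-hand side lies in negative cohomological degree, hence vanishes, so $u^{2w-2n}\sum_i\mu_i a_\lambda^{(i)}=0$; since $u$ is invertible in the localized ring this forces $\sum_i\mu_i a_\lambda^{(i)}=0$, which equals $p_\lambda[N]$ under the paper's convention that $p_\lambda[N]=0$ unless the weight of $\lambda$ is $n$.

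The step I expect to require the most care is the orientation and sign bookkeeping: one must check that the very choice of isomorphisms $V^{(i)}_j\cong\mathbb{C}$ that makes the product orientation agree with the given orientation of $T_{P_i}N$ is exactly what makes the equivariant Euler class come out as $\big(\prod_j k^{(i)}_j\big)u^{2n}$ with the sign compatible with the fundamental class $[N]$ used in $\int_N$. The numerators are harmless, since $p_j^{S^1}|_{P_i}$ only involves the squares $(k^{(i)}_j)^2$ and is thus insensitive to the residual ``even number of sign changes'' ambiguity; all the content is in matching the Euler-class convention to $[N]$. Beyond that, everything is the standard localization machinery, which one may alternatively simply quote from \cite{AS}.
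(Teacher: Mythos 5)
Your proposal is correct, but note that the paper itself offers no proof of this statement at all: it simply quotes the identity as a special case of the Bott residue formula, citing Atiyah--Singer \cite{AS}, p.~598, where it is obtained from index-theoretic considerations (the $G$-index theorem and the Lefschetz fixed point formula). What you supply is a self-contained modern derivation via the Atiyah--Bott--Berline--Vergne localization theorem in $S^1$-equivariant cohomology, and every step checks out: the identification $p_j^{S^1}\big|_{P_i}=e_j\big((k_1^{(i)})^2,\dots,(k_{2n}^{(i)})^2\big)u^{2j}=a_j^{(i)}u^{2j}$ is correct because the realification of a weight-$k$ complex line has equivariant $p_1$ equal to $k^2u^2$; the degree count $\int_N P\in H^{4w-4n}_{S^1}(\mathrm{pt})$ correctly forces the answer to be $0$ for $w<n$ and the ordinary Pontrjagin number for $w=n$, matching the paper's convention that $p_\lambda[N]=0$ unless $\lambda$ has weight $n$; and you rightly isolate the one genuinely delicate point, namely that the convention fixing the $k_j^{(i)}$ up to an even number of sign changes (so that the product orientation of the $V_j^{(i)}$ agrees with that of $T_{P_i}N$) is exactly what makes $e_{S^1}(T_{P_i}N)=\big(\prod_j k_j^{(i)}\big)u^{2n}$ compatible with the fundamental class $[N]$, while the numerators are insensitive to this ambiguity since they involve only squares. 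The trade-off between the two routes is the usual one: citing \cite{AS} keeps the paper short but imports heavy machinery as a black box, whereas your localization argument is longer to write out but elementary granted ABBV, and has the added virtue of proving the $w<n$ vanishing (which is precisely the ``stability'' condition \eqref{stable} the paper needs) by the same degree argument rather than as a separate fact.
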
 Combining this formula with Proposition \ref{algebraic result}
will lead to the following result, which is parallel to (\cite{CKP},
Theorem 1.3) and a generalization of (\cite{LL}, Theorem 1.4) in the
smooth case.

\begin{theorem}
Suppose $N^{4n}$ is a $4n$-dimensional connected, closed and
oriented smooth manifold and there exists a partition
$\lambda=(1^{m_1(\lambda)}2^{m_2(\lambda)}\cdots n^{m_n(\lambda)})$
of weight $n$ such that $p_{\lambda}[N]\neq 0$, then any circle
action on $N^{4n}$ has at least $m(\lambda)+1$ fixed points.
\end{theorem}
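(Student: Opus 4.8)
The strategy is to translate the topological hypothesis into the algebraic language of Proposition~\ref{algebraic result} and then simply invoke that proposition. Suppose, for contradiction, that some circle action on $N^{4n}$ has $r \le m(\lambda)$ fixed points (the fixed point set is non-empty and automatically isolated-free obstructions aside — one would first note that a circle action with $p_\lambda[N]\neq 0$ cannot be fixed-point free, since Bott's formula would then force $p_\lambda[N]=0$). Label the fixed points $\{P_1,\dots,P_r\}$ and attach to each $P_i$ the coefficient $\mu_i = 1/\prod_{j=1}^{2n}k_j^{(i)}$ and the characteristic numbers $a_j^{(i)} = e_j\big((k_1^{(i)})^2,\dots,(k_{2n}^{(i)})^2\big)$ exactly as in the setup preceding the Bott residue formula. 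These lie in $F=\mathbb{R}$, so $\{P_1,\dots,P_r\}$ is a weighted set over $\mathbb{R}$.

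Next I would verify that this weighted set is \emph{stable}. For any partition $\nu$ of weight $w$ with $0 \le w < n$, the corresponding Pontrjagin number $p_\nu[N]$ vanishes for degree reasons: $p_1^{m_1(\nu)}\cdots p_n^{m_n(\nu)}$ lives in $H^{4w}(N;\mathbb{Z})$ with $4w < 4n$, so its pairing with $[N]$ is zero. By the Bott residue formula, $\Gamma(\nu) = \sum_{i=1}^r \mu_i a_\nu^{(i)} = p_\nu[N] = 0$. This is precisely condition~\eqref{stable}, so the weighted set is stable over $\mathbb{R}$.

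Now apply Proposition~\ref{algebraic result} to the partition $\lambda$ of weight $n$ in the hypothesis. Since by assumption $r \le m(\lambda)$, i.e.\ $m(\lambda) \ge r$, the proposition yields $\Gamma(\lambda) = 0$. But applying the Bott residue formula once more with the partition $\lambda$ gives $\Gamma(\lambda) = p_\lambda[N]$, whence $p_\lambda[N] = 0$, contradicting the hypothesis. Therefore $r \ge m(\lambda)+1$.

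I expect no serious obstacle here: the theorem is essentially a direct corollary of Proposition~\ref{algebraic result} once the dictionary (fixed points $\leftrightarrow$ weighted set, Bott residue formula $\leftrightarrow$ stability and the final evaluation) is set up. The only points requiring a sentence of care are (i) ruling out the fixed-point-free case so that $r\ge 1$ and the weighted set is genuinely non-empty, and (ii) confirming that the orientation conventions make $\mu_i$ and the $a_j^{(i)}$ well-defined — but this was already arranged in the paragraph defining the rotation numbers $k_j^{(i)}$ up to an even number of sign changes, which leaves $\prod_j k_j^{(i)}$ and each $e_j\big((k_1^{(i)})^2,\dots\big)$ unchanged.
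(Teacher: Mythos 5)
Your proof is correct and is exactly the argument the paper intends: the paper simply states that the theorem follows by ``combining'' the Bott residue formula with Proposition~\ref{algebraic result}, and your write-up supplies precisely that combination (stability from the degree-reason vanishing of $p_\nu[N]$ for $w<n$, then the contradiction via $\Gamma(\lambda)=p_\lambda[N]$). Your parenthetical on the fixed-point-free case matches the paper's Remark~\ref{remark}, which also notes that a non-isolated fixed-point set trivially contains infinitely many points.
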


\begin{remark}\label{remark}
In the statement of above theorem, we require no restriction that
the circle action must have non-empty isolated fixed points. The
reason is as follows. The non-vanishing of some Pontrjagin number
guarantees that the fixed points of any circle action must be
non-empty. If the fixed-point set is not isolated, then at least one
connected component of it is a submanifold of positive dimension and
in this case there are infinitely many fixed points.
\end{remark}

\begin{corollary}
Suppose $N^{4n}$ is a $4n$-dimensional connected, closed and
oriented smooth manifold. Let
$$m:=\textrm{max}\{m(\lambda)~|~p_{\lambda}[N]\neq 0\}.$$
Then any circle action on $N^{4n}$ has at least $m+1$ fixed points.
\end{corollary}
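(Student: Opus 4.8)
The plan is to deduce this at once from the theorem proved just above by feeding it the partition that realizes $m$. Since every partition entering the definition of $m$ has weight $n$, and there are only finitely many partitions of weight $n$, the maximum is attained: there is a partition $\lambda_0=(1^{m_1(\lambda_0)}2^{m_2(\lambda_0)}\cdots n^{m_n(\lambda_0)})$ of weight $n$ with $p_{\lambda_0}[N]\neq 0$ and $m(\lambda_0)=m$. (If no Pontrjagin number of $N$ is non-zero the indexing set is empty and the statement is vacuous, so we may assume it is non-empty.) Applying the theorem above to $N^{4n}$ with this $\lambda_0$, the hypothesis $p_{\lambda_0}[N]\neq 0$ gives that any circle action on $N^{4n}$ has at least $m(\lambda_0)+1=m+1$ fixed points, which is exactly the assertion.

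For completeness I would also recall the mechanism behind that theorem, so that the corollary reads self-containedly. As in Remark \ref{remark}, non-vanishing of $p_{\lambda_0}[N]$ forces the fixed-point set of any circle action to be non-empty; if it is not isolated, some connected component is a positive-dimensional submanifold, so there are infinitely many fixed points and the bound holds trivially. In the remaining case the fixed points $\{P_1,\dots,P_r\}$, equipped with the numbers $\mu_i$ and $a_j^{(i)}$ built from the isotropy representations, form a stable weighted set over $\mathbb{R}$: stability is precisely the Bott residue identity $\Gamma(\lambda)=p_\lambda[N]=0$ for partitions of weight $<n$ (such Pontrjagin numbers vanish for degree reasons). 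Hence Proposition \ref{algebraic result} applies to $\lambda_0$: were $r\leq m(\lambda_0)$, it would yield $p_{\lambda_0}[N]=\Gamma(\lambda_0)=0$, contradicting the choice of $\lambda_0$; therefore $r\geq m(\lambda_0)+1=m+1$.

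I do not expect any genuine obstacle here, since the substance already lies in Proposition \ref{algebraic result} and in the theorem above. The only points needing a word of care are the observation that $m$ is realized by an actual partition (so that the reduction makes sense) and the verification, recalled through Remark \ref{remark}, that no hypothesis on the fixed-point set has to be imposed.
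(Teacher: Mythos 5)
Your proposal is correct and follows exactly the route the paper intends: the corollary is an immediate consequence of the preceding theorem applied to a partition $\lambda_0$ of weight $n$ realizing the maximum $m$ (which exists since only finitely many partitions of weight $n$ occur), and your recap of the underlying mechanism matches the paper's argument. No issues.
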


\section{Applications to $\mathbb{Z}_p$ actions}
Let us recall the notation of unitary manifold (some people call it
stably almost-complex manifold or weakly almost-complex manifold),
which is a generalization of that of almost-complex manifold.

Let $M^{2n}$ be a $2n$-dimensional connected, closed and smooth
manifold. $M^{2n}$ is called a \emph{unitary manifold} if it is
endowed with a complex vector bundle structure on the stable tangent
bundle. More precisely, we can give a complex vector bundle
structure on $TM\oplus\theta^{2l}$, where $TM$ is the tangent bundle
of $M$ and $\theta^{2l}$ is the product bundle
$M\times\mathbb{R}^{2l}$. $TM\oplus\theta^{2l}$ can be oriented from
the complex vector bundle structure and $\theta^{2l}$ is also
oriented in the usual way. These orientations induce an orientation
of $TM$. Hereafter a unitary manifold $M^{2n}$ is always oriented in
such a way.

Now suppose the unitary manifold $M^{2n}$ admits a $\mathbb{Z}_p$
action preserving the given complex vector bundle structure. Here of
course $p$ is a prime and $\mathbb{Z}_p$ is the cyclic group of
order $p$. Thus $\mathbb{Z}_p$ is a finite field. Moreover we
suppose this action has isolated fixed points, say $P_1,\cdots,P_r$.
Then at each fixed point $P_i$, $\mathbb{Z}_p$ acts linearly on the
complex vector space $V_i:=T_{P_i}M\oplus\mathbb{R}^{2l}$ and the
complex subspace of fixed vectors of this action is exactly
$\mathbb{R}^{2l}$. Thus $V_i/\mathbb{R}^{2l}$ is a complex
$\mathbb{Z}_p$-module without trivial factor, which is isomorphic to
$T_{P_i}M$ as real vector space. So we have $n$ weights
$k^{(i)}_1,\cdots,k^{(i)}_n\in\mathbb{Z}_p-\{0\}$ induced from this
$\mathbb{Z}_p$-representation.

Note that, at each $P_i$, the tangent space $T_{P_i}M$ has two
orientations. One is induced from that of $M$ and the other is
induced from the complex structure of $V_i/\mathbb{R}^{2l}$. We set
$\epsilon(P_i)=+1$ or $-1$ according to these two orientations
coincide or not. Clearly $\epsilon(P_i)=+1$ if $M^{2n}$ is an
almost-complex manifold.

Let $c_j\in H^{2j}(M;\mathbb{Z})$ $(1\leq j\leq n)$ be the $j$-th
Chern class of the complex vector bundle $TM\oplus\theta^{2l}$.
Given any partition $\lambda=(1^{m_1(\lambda)}2^{m_2(\lambda)}\cdots
n^{m_n(\lambda)})$, we can define the corresponding Chern number
$c_{\lambda}[M]$:
$$c_{\lambda}[M]:=<c_1^{m_1(\lambda)}c_2^{m_2(\lambda)}\cdots c_n^{m_n(\lambda)},[M]>.$$
By definition $c_{\lambda}[M]=0$ unless the weight of $\lambda$ is
$n$.

At each fixed point $P_i$ $(1\leq i\leq r)$, we can associate to
$n+1$ numbers $\mu_i,a_1^{(i)},\cdots,a_n^{(i)}\in\mathbb{Z}_p$ as
follows.
$$\mu_i=\epsilon(P_i)\cdot(\prod^{n}_{j=1}k^{(i)}_{j})^{-1},\qquad
a_j^{(i)}=e_j(k_1^{(i)},\cdots,k_n^{(i)}),\qquad 1\leq j\leq n.$$
Here $e_j(x_1,\cdots,x_n)$ is the $j$-th elementary symmetric
polynomial in the variables $x_1,\cdots,x_n$.

The following proposition is a special case of a result of
Kosniowski (\cite{Ko}, Theorem 1.1), which reduces the calculations
of the module $p$ Chern numbers of $M$ to the fixed points $\{P_i\}$
and is a $\mathbb{Z}_p$ analogue to Bott residue formula in the
circle action.

\begin{proposition}[Kosniowski]\label{Kos}
With all these above understood and let
$\lambda=(1^{m_1(\lambda)}2^{m_2(\lambda)}\cdots n^{m_n(\lambda)})$
be any partition whose weight is no more than $n$. Then we have
$$\sum_{i=1}^{r}\mu_i\cdot a_{\lambda}^{(i)}\equiv c_{\lambda}[M],\qquad \textrm{mod}~p.$$
\end{proposition}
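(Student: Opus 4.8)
The statement is the special case of Kosniowski's theorem recorded in (\cite{Ko}, Theorem 1.1), and the way I would prove it is through the $\mathbb{Z}_p$-equivariant index theorem for the elliptic operator attached to the stable complex structure. The stable almost-complex structure on $M^{2n}$ together with the chosen orientation determines a $\mathbb{Z}_p$-equivariant $\mathrm{Spin}^c$-Dirac (Dolbeault-type) operator $D_W$, twisted by an arbitrary $\mathbb{Z}_p$-equivariant complex bundle $W$, whose non-equivariant index is the Riemann--Roch number $\langle\mathrm{ch}(W)\,\mathrm{Td}(TM\oplus\theta^{2l}),[M]\rangle$; note $\mathrm{Td}(TM\oplus\theta^{2l})$ is a polynomial in the Chern classes $c_j$ of the proposition. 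Fixing a generator $g$ of $\mathbb{Z}_p$ and $\zeta=e^{2\pi\sqrt{-1}/p}$, the equivariant index theorem localizes the Lefschetz number $L(g,W):=\mathrm{tr}\bigl(g\mid\ker D_W\bigr)-\mathrm{tr}\bigl(g\mid\mathrm{coker}\,D_W\bigr)$ to the isolated fixed points:
$$L(g,W)=\sum_{i=1}^{r}\frac{\epsilon(P_i)\,\mathrm{tr}(g\mid W_{P_i})}{\prod_{j=1}^{n}\bigl(1-\zeta^{-k_j^{(i)}}\bigr)}\ \in\ \mathbb{Z}[\zeta].$$

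Next I would pass to the residue field. Let $\mathfrak p=(1-\zeta)$ be the prime of $\mathbb{Z}[\zeta]$ lying over $p$, so that $\mathbb{Z}[\zeta]/\mathfrak p\cong\mathbb{Z}_p$ with $\zeta\mapsto 1$. Since $g$ acts on each finite-dimensional cohomology space with eigenvalues powers of $\zeta$, all $\equiv 1\pmod{\mathfrak p}$, one gets $L(g,W)\equiv L(1,W)=\langle\mathrm{ch}(W)\,\mathrm{Td}(M),[M]\rangle\pmod{\mathfrak p}$, and the right-hand side, being an ordinary integer, reduces to its class mod $p$. Now one chooses $W$ — a virtual combination of $TM\oplus\theta^{2l}$ and exterior powers of its conjugate, in the style of Bott — so that (i) the top-degree part of $\mathrm{ch}(W)\,\mathrm{Td}(M)$ is the monomial $c_1^{m_1(\lambda)}\cdots c_n^{m_n(\lambda)}$ defining $c_\lambda[M]$, and (ii) at each $P_i$ the numerator $\mathrm{tr}(g\mid W_{P_i})$ is divisible by $(1-\zeta^{-1})^n$. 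Using $1-\zeta^{-m}=(1-\zeta^{-1})\bigl(1+\zeta^{-1}+\cdots+\zeta^{-(m-1)}\bigr)$ with the second factor $\equiv m\pmod{\mathfrak p}$, the denominator is $(1-\zeta^{-1})^n$ times a unit congruent to $\prod_j k_j^{(i)}$; cancelling $(1-\zeta^{-1})^n$, each fixed-point term reduces modulo $\mathfrak p$ to $\epsilon(P_i)\bigl(\prod_j k_j^{(i)}\bigr)^{-1}$ times $\prod_j e_j(k_1^{(i)},\dots,k_n^{(i)})^{m_j(\lambda)}$, that is, to $\mu_i\cdot a_\lambda^{(i)}$ (up to the universal sign contributed by the conjugation and by the Todd factor in (i)). Summing over $i$ yields $\sum_i\mu_i a_\lambda^{(i)}\equiv c_\lambda[M]\pmod p$, and since $c_\lambda[M]=0$ whenever the weight of $\lambda$ is less than $n$, the congruence holds for every partition of weight $\le n$.

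The main obstacle is item (ii) together with the attendant sign and constant bookkeeping: producing the virtual bundle $W$ whose equivariant fibre at every $P_i$ vanishes to order exactly $n$ at $\zeta=1$ and whose $(1-\zeta^{-1})^{-n}$-normalized leading coefficient reduces, modulo $\mathfrak p$, precisely to the products of elementary symmetric polynomials $e_j(k_1^{(i)},\dots,k_n^{(i)})$, and then checking that the universal constants from the Todd class and from the conjugation cancel so that the final congruence carries coefficient $1$. This is exactly the computation performed by Kosniowski, and for the purposes of this note it suffices to invoke (\cite{Ko}, Theorem 1.1) directly: the quantities $\mu_i=\epsilon(P_i)\bigl(\prod_j k_j^{(i)}\bigr)^{-1}$ and $a_j^{(i)}=e_j(k_1^{(i)},\dots,k_n^{(i)})$ introduced above are precisely the fixed-point invariants occurring there, so the stated form follows at once.
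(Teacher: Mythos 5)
The paper offers no proof of this proposition at all: it is stated as a special case of Kosniowski's Theorem 1.1 and simply cited from \cite{Ko}, which is exactly what your argument ultimately does, so your approach matches the paper's. Your preliminary index-theoretic sketch (equivariant Lefschetz localization, reduction modulo the prime $(1-\zeta)$ of $\mathbb{Z}[\zeta]$, and a Bott-style choice of twisting bundle) is a plausible outline of how such congruences are established, but since you defer the one genuinely hard step --- producing $W$ and verifying the constants --- to the citation, the substance of your proof and the paper's coincide.
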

Combining this proposition with Proposition \ref{algebraic result}
will lead to the following result.

\begin{theorem}\label{p-theorem}
Given a $2n$-dimensional connected closed unitary manifold $M^{2n}$
and a prime $p$. If there exists a partition
$\lambda=(1^{m_1(\lambda)}2^{m_2(\lambda)}\cdots n^{m_n(\lambda)})$
of weight $n$ such that the corresponding Chern number
$c_{\lambda}[M]$ is not divisible by $p$, then any $\mathbb{Z}_p$
action on $M^{2n}$ has at least $m(\lambda)+1$ fixed points.
\end{theorem}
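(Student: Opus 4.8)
The plan is to deduce Theorem \ref{p-theorem} by applying Proposition \ref{algebraic result} to the weighted set over the finite field $\mathbb{Z}_p$ that arises from the fixed-point data, exactly as the theorem on circle actions was deduced from Bott residue formula. First I would argue by contradiction: suppose a $\mathbb{Z}_p$ action on $M^{2n}$ had at most $m(\lambda)$ fixed points, i.e. $r\leq m(\lambda)$. Since $c_{\lambda}[M]$ is not divisible by $p$, in particular $c_{\lambda}[M]\not\equiv 0\pmod p$, so the fixed-point set cannot be empty (the empty sum on the left of Kosniowski's formula would be $0$, contradicting $c_{\lambda}[M]\not\equiv 0$); hence $1\leq r\leq m(\lambda)$, and the fixed points $P_1,\dots,P_r$ are isolated by hypothesis.

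Next I would verify that the data $\mu_i=\epsilon(P_i)\cdot(\prod_{j=1}^n k_j^{(i)})^{-1}$ and $a_j^{(i)}=e_j(k_1^{(i)},\dots,k_n^{(i)})$ indeed form a \emph{stable} weighted set over $\mathbb{Z}_p$. This is immediate from Kosniowski's Proposition \ref{Kos}: for every partition $\lambda'$ of weight $w$ with $0\leq w<n$, the Chern number $c_{\lambda'}[M]$ vanishes (by the degree reason noted before the proposition, since the weight is not $n$), so $\Gamma(\lambda')=\sum_{i=1}^r \mu_i\cdot a_{\lambda'}^{(i)}\equiv c_{\lambda'}[M]\equiv 0\pmod p$, which is precisely the stability condition \eqref{stable} over $\mathbb{Z}_p$. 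One should note here that the $k_j^{(i)}$ lie in $\mathbb{Z}_p-\{0\}$, so $\mu_i$ is a well-defined nonzero element of the field $\mathbb{Z}_p$, and the elementary symmetric expressions $a_j^{(i)}$ are genuine elements of $\mathbb{Z}_p$; nothing beyond the field axioms of $\mathbb{Z}_p$ is used.

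Then I would apply Proposition \ref{algebraic result} with the given partition $\lambda$ of weight $n$. By hypothesis $m(\lambda)\geq r$, so the proposition applies and yields $\Gamma(\lambda)=\sum_{i=1}^r \mu_i\cdot a_{\lambda}^{(i)}=0$ in $\mathbb{Z}_p$. But Kosniowski's formula gives $\Gamma(\lambda)\equiv c_{\lambda}[M]\pmod p$, so $c_{\lambda}[M]\equiv 0\pmod p$, i.e. $p\mid c_{\lambda}[M]$, contradicting the hypothesis that $c_{\lambda}[M]$ is not divisible by $p$. This contradiction forces $r\geq m(\lambda)+1$, completing the proof.

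The argument is essentially a translation of the circle-action case with $\mathbb{R}$ replaced by the finite field $\mathbb{Z}_p$ and Bott residue formula replaced by Kosniowski's formula, so there is no serious obstacle; the only point requiring a moment's care is the bookkeeping that the reduction mod $p$ in Proposition \ref{Kos} lets us read the vanishing of $\Gamma(\lambda')$ \emph{in the field} $\mathbb{Z}_p$ (not merely as an integer congruence), which is what makes the purely algebraic Proposition \ref{algebraic result} directly applicable. One could alternatively phrase the conclusion through the corollary $r\geq m+1$ with $m=\max\{m(\lambda): c_{\lambda}[M]\not\equiv 0 \bmod p\}$, but the contradiction form above is the cleanest.
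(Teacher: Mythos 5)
Your proposal is correct and follows exactly the route the paper intends: establish stability of the weighted set over $\mathbb{Z}_p$ from Kosniowski's formula together with the vanishing of $c_{\lambda'}[M]$ for partitions of weight less than $n$, then apply Proposition \ref{algebraic result} to force $\Gamma(\lambda)=0$ whenever $r\leq m(\lambda)$, contradicting $c_{\lambda}[M]\not\equiv 0 \bmod p$. The paper leaves this combination implicit, and your write-up supplies precisely the missing details, including the correct observation that nonvanishing of $c_\lambda[M]$ mod $p$ rules out an empty or non-isolated fixed-point set.
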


\begin{remark}
Here we still don't need the assumption that the $\mathbb{Z}_p$
action has isolated fixed points and the reason is the same as that
of Remark \ref{remark}.
\end{remark}

\begin{corollary}\label{coro2}
Given a $2n$-dimensional connected closed unitary manifold $M^{2n}$
and a prime $p$. Let
$$m:=\textrm{max}\{m(\lambda)~|~c_{\lambda}[M]~\textrm{is not divisible by}~p\}.$$
Then any $\mathbb{Z}_p$ action on $M^{2n}$ has at least $m+1$ fixed
points.
\end{corollary}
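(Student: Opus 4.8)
The plan is to read this off directly from Theorem~\ref{p-theorem}, of which it is simply a uniform version ranging over all admissible partitions. First I would note that the collection of integers $m(\lambda)$, taken over those partitions $\lambda$ with $c_\lambda[M]$ not divisible by $p$, is a finite set of non-negative integers: there are only finitely many partitions $\lambda$, and only those of weight $n$ can occur, since $c_\lambda[M]=0$ (hence divisible by $p$) as soon as the weight of $\lambda$ differs from $n$. Assuming this set is non-empty --- which is the implicit hypothesis of the statement, since if every Chern number of $M^{2n}$ were divisible by $p$ one could not hope to force even a single fixed point --- the maximum $m$ is attained by some partition $\lambda_0$ of weight $n$ with $c_{\lambda_0}[M]$ not divisible by $p$ and $m(\lambda_0)=m$.

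Next I would apply Theorem~\ref{p-theorem} to this particular $\lambda_0$. Its hypothesis is precisely that $c_{\lambda_0}[M]$ is not divisible by $p$, so the conclusion yields that any $\mathbb{Z}_p$ action on $M^{2n}$ has at least $m(\lambda_0)+1=m+1$ fixed points, which is exactly the assertion. As in Theorem~\ref{p-theorem}, no hypothesis that the $\mathbb{Z}_p$ action has isolated fixed points is needed, for the reason recorded in Remark~\ref{remark}: the non-divisibility of a Chern number already forces the fixed-point set to be non-empty, and if it had a positive-dimensional component there would be infinitely many fixed points.

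I do not anticipate any genuine obstacle here: the whole content lies in Theorem~\ref{p-theorem}, which itself is obtained by feeding Kosniowski's localization identity (Proposition~\ref{Kos}) into the purely algebraic Proposition~\ref{algebraic result}, applied to the weighted set $\{P_i\}$ over the finite field $\mathbb{Z}_p$ with the data $\mu_i,a_j^{(i)}\in\mathbb{Z}_p$ defined just before Proposition~\ref{Kos}. The only point worth a sentence is the degenerate case in which no Chern number of $M$ survives modulo $p$, so that $m$ is undefined; there the statement is to be read as vacuous, and this is consistent, since a closed unitary manifold admitting a fixed-point-free $\mathbb{Z}_p$ action (in particular a free one) has all its Chern numbers divisible by $p$, as one sees by pushing forward along the $p$-fold quotient cover.
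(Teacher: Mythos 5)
Your proposal is correct and is exactly the argument the paper intends: the corollary follows by applying Theorem~\ref{p-theorem} to a partition $\lambda_0$ of weight $n$ attaining the maximum $m$, and the paper itself gives no further proof. Your remarks on finiteness of the set of admissible partitions and on the degenerate case where all Chern numbers vanish modulo $p$ are sensible additions but not substantive departures.
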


The following result is an obstruction to the existence of a
$\mathbb{Z}_p$ action on $M^{2n}$ with isolated fixed points, which
is a direct application of Corollary \ref{coro}.

\begin{theorem}\label{end theorem}
Given a $2n$-dimensional connected closed unitary manifold $M^{2n}$
and a prime $p$. Let
$$m:=\textrm{max}\{m(\lambda)~|~c_{\lambda}[M]~\textrm{is not divisible by}~p\}.$$
If there exists a $\mathbb{Z}_p$ action on $M^{2n}$ with isolated
fixed points, then
$$p\geq m+1.$$
\end{theorem}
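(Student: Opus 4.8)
The plan is to manufacture, out of the hypothesized action, a stable weighted set over the finite field $F=\mathbb{Z}_p$ and then invoke Corollary \ref{coro} directly. So suppose a $\mathbb{Z}_p$ action on $M^{2n}$ with isolated fixed points $P_1,\ldots,P_r$ is given, and at each $P_i$ attach the $n+1$ numbers $\mu_i=\epsilon(P_i)\cdot(\prod_{j=1}^{n}k^{(i)}_j)^{-1}$ and $a_j^{(i)}=e_j(k_1^{(i)},\ldots,k_n^{(i)})$ in $\mathbb{Z}_p$, exactly as in the set-up preceding Proposition \ref{Kos}. This makes $\{P_1,\ldots,P_r\}$ a weighted set over $F=\mathbb{Z}_p$, the integer ``$n$'' of Section~1 being the complex dimension of $M$.

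First I would verify that this weighted set is stable. For any $0\le w<n$ and any partition $\lambda$ of weight $w$, Proposition \ref{Kos} gives $\Gamma(\lambda)=\sum_{i=1}^{r}\mu_i\cdot a^{(i)}_\lambda\equiv c_\lambda[M]\pmod p$; but $c_\lambda[M]=0$ because the weight of $\lambda$ is strictly less than $n$, so $\Gamma(\lambda)=0$ in $\mathbb{Z}_p$, which is exactly condition (\ref{stable}). Hence the weighted set is stable, and Proposition \ref{algebraic result} together with its corollaries applies to it.

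Next I would match the two quantities called $m$. For a partition $\lambda$ of weight exactly $n$, Proposition \ref{Kos} again gives $\Gamma(\lambda)\equiv c_\lambda[M]\pmod p$, so $\Gamma(\lambda)\ne 0$ in $\mathbb{Z}_p$ precisely when $c_\lambda[M]\not\equiv 0\pmod p$, i.e. when $c_\lambda[M]$ is not divisible by $p$; for partitions of smaller weight both $\Gamma(\lambda)$ and $c_\lambda[M]$ vanish. Therefore the set $\{m(\lambda):\Gamma(\lambda)\ne 0\}$ appearing in Corollary \ref{coro} for this weighted set coincides with $\{m(\lambda):c_\lambda[M]\text{ is not divisible by }p\}$, so the two maxima agree. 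Applying Corollary \ref{coro} with $F=\mathbb{Z}_p$, where $|F|=p$, then gives $p\ge m+1$, which is the assertion.

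I expect no genuinely hard step here: the topology is entirely absorbed into Kosniowski's localization formula (Proposition \ref{Kos}) and the combinatorics into Corollary \ref{coro}, so the remaining work is just the bookkeeping of definitions above. The only point that would deserve a remark is the degenerate case in which no weight-$n$ partition $\lambda$ has $c_\lambda[M]$ indivisible by $p$, so that $m$ is a maximum over the empty set; then the statement is vacuous. Note also that if the fixed-point set were empty, then every $\Gamma(\lambda)$ would be an empty sum and Proposition \ref{Kos} would force $c_\lambda[M]\equiv 0\pmod p$ for all $\lambda$, so the hypothesis that $m$ is well-defined already guarantees $r\ge 1$ and a non-degenerate set-up.
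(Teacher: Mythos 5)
Your proof is correct and follows exactly the route the paper intends: the paper presents Theorem \ref{end theorem} as a direct application of Corollary \ref{coro} to the weighted set built from the fixed points via Kosniowski's formula, with stability and the identification of the two maxima following from Proposition \ref{Kos} just as you describe. Your closing remarks on the degenerate cases are sound additional bookkeeping, not a deviation.
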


\begin{remark}
\begin{enumerate}
\item
As mentioned in (\cite{Ko}, p. 284), there is a similar formula like
Proposition \ref{Kos} in the smooth case for $\mathbb{Z}_p$ actions
so long as $p$ is an odd prime. So we have similar results like
Theorem \ref{p-theorem}, Corollary \ref{coro2} and Theorem \ref{end
theorem} for smooth $\mathbb{Z}_p$ action on $4n$-dimensional
smooth, closed and oriented manifolds in terms of Pontrjagin numbers
so long as $p$ is an odd prime.

\item
If $p$ is an odd prime and $\mathbb{Z}_p$ acts smoothly on a closed,
oriented, smooth manifold $N^{2n}$ with isolated fixed points such
that the fixed points satisfy some additional assumption, then Ewing
and Kosniowski also gives a lower bound of the number of these fixed
points in terms of $n$ and $p$ (\cite{EK}, p. 295).
\end{enumerate}
\end{remark}

\end{document}